\newcommand{\chuhao}{\fontsize{19pt}{\baselineskip}\selectfont}
\numberwithin{equation}{section}
 \newtheorem{theorem}{Theorem}[section]
 \newtheorem{lemma}{Lemma}[section]
 \newtheorem{assumption}{Assumption}[section]
 \newtheorem{proposition}{Proposition}[section]
 \newtheorem{remark}{Remark}[section]
\title{\bf\color{black} \chuhao{Computable Bounds on the Solution to Poisson's Equation for General Harris Chains}}
\author{
Peter W. Glynn \thanks{Department of Management Science and Engineering, Institute for Computational and Mathematical Engineering, Stanford University, USA.}
\and  Na Lin \thanks{School of Mathematics and Statistics, HNP-LAMA, Central South University, China.}
\and Yuanyuan Liu \thanks{School of Mathematics and Statistics, HNP-LAMA, Central South University, China.}}
\date{}
\begin{document}
\maketitle

\begin{abstract}
Poisson's equation is fundamental to the study of Markov chains, and arises in connection with martingale representations and central limit theorems for additive functionals, perturbation theory for stationary distributions, and average reward Markov decision process problems. In this paper, we develop a new probabilistic representation for the solution of Poisson's equation, and use Lyapunov functions to bound this solution representation explicitly. In contrast to most prior work on this problem, our bounds are computable. Our contribution is closely connected to recent work of \cite{ledoux23-pe}, in which they focus their study on a special class of Harris chains satisfying a particular small set condition. However, our theory covers general Harris chains, and often provides a tighter bound. In addition to the new bound and representation, we also develop a computable uniform bound on marginal expectations for Harris chains, and a computable bound on the potential kernel representation of the solution to Poisson's equation.

\vskip 0.2cm
\noindent \textbf{Keywords:} Poisson's equation; drift conditions; Harris chains; stationary distributions; potential kernel representation
\vskip 0.1cm
\noindent \textbf{MSC2020:} 60J05; 60J20\\
\end{abstract}

\section{Introduction}

Let $X=\left(X_{n}: n\geq 0\right)$ be an $S$-valued Markov chain with one-step transition kernel $P=\left(P(x, y\right):$ $ x, y \in S)$. For a generic function $h: S \rightarrow \mathbb{R}$ and measure $\mu$, let $P h, \mu P$, and $\mu h$ be the function, measure, and scalar defined by
$$ (P h)(x)=\int_S h(y) P(x, d y), $$
$$ (\mu P)(y)=\int_S \mu (d x) P(x, y),$$
and
$$ \mu h=\int_S h(y) \mu(d y). $$

Suppose that $P$ has a unique stationary distribution $\pi$, so that $\pi=\pi P$. For a measurable function $f: S \rightarrow \mathbb{R}$, let $f_c(\cdot)=f(\cdot)-\pi f $ be the ``centered'' version of $f$. We say that $g$ is a solution of \textit{Poisson's equation}
for the \textit{charge} (or \textit{forcing function}) $f_c$ if $\int_S |g(y)|P(x,dy)<\infty$ for $x\in S$ and
\begin{equation}\label{1.1}
(P - I)g = -f_c,
\end{equation}
where $I$ is the identity operator. Suppose that $e(x) \equiv 1$ for $x \in S$. We note that if $g$ solves \eqref{1.1}, then so does $g + c e$ for any $c \in \mathbb{R}$, so that (at best) solutions to Poisson's equation can be unique only up to an additive constant.

In this paper, we show that in the presence of suitable Lyapunov functions $v_1, v_2$ and a small set $C$, we can obtain a simple and computable bound on the (canonical) solution $g^{*}$ to \eqref{1.1}. This result complements \cite{glynn96}, in which it is shown that $g^*$ can be bounded in terms of $v_1$, $v_2$, and the small set $C$. However, the method applied there does not lend itself to computing explicit bounds on $g^*$ because it starts by analyzing the geometrically sampled version of $X$, namely $(X_{\xi_i} : i \geq 0)$,
where $\xi_0 = 0$ and $\{\xi_i - \xi_{i-1}\}_{i \geq 1}$ are independent and identically distributed (i.i.d.) geometric random variables (rv's). In contrast, we construct a probabilistic representation for $g^*$ directly in terms of $X$ (see Theorem \ref{theorem1}), thereby allowing for simpler and tighter bounds that are, importantly, readily computable.

Poisson's equation is a fundamental tool in the analysis of Markov chains. It arises naturally within the limit theory for Markov chains (see the survey by \cite{jones2004} and recent work of \cite{hofstadler2024almost}), the construction of martingales associated with Markov-dependent additive functionals (\cite{maigret1978}), gradients of steady-state performance measures (\cite{rhee2023lyapunov}), and in connection with the optimality equation for average reward/average cost stochastic control problems; see \cite{ross2014}.

\cite{glynn2002hoeffding} derive a computable bound on Poisson's equation for uniformly ergodic Markov chains, and
apply it to derive a Hoeffding inequality for such Markov chains. More recently, \cite{ledoux23-pe} also derive a closely related computable bound for the solution of Poisson's equations for Harris chains, but focus their analysis on the case in which a strong minorization condition on the transition kernel is satisfied.
In the notation of our Assumption \ref{assume1}, they primarily work under the assumption that $m=1$.
In their Remark 2.1, Herv\'e and Ledoux show how to extend their analysis for $m=1$ to the setting in which $m>1$. However, because their extension proceeds via an analysis of $(P^m-I)g=-f_c$ rather than through \eqref{1.1} directly (as in our analysis), their results for the general case are not as sharp as ours; see Remark \ref{remark-Herve}.
A major contribution of our theory is that it is intended to cover general Harris chains.

Our development of a direct bound on \eqref{1.1} requires a significantly different analysis.
Among the examples in which $m \geq 2$ appears is the Kiefer-Wolfowitz waiting time process for the G/G/s multi-server queue (see \cite{kiefer1955theory}) and typical generalized semi-Markov processes, a class of models used to represent discrete-event simulations; see \cite{henderson2001regenerative}.
Our general argument requires new ideas relative to the $m = 1$ setting because the shift invariance in the random time $\tau$ used to define the solution to Poisson's equation when $m=1$ fails when $m \geq 2$. This failure of shift invariance motivates our development of a new technique based on renewal equation ideas for proving that our probabilistic representation $g^*$ is indeed a solution of Poisson's equation (see the proof of Theorem \ref{theorem1}).

Along the way to proving Theorems \ref{theorem1} and \ref{theorem2}, we also develop a computable and uniform bound on
$\mathbb{E} \big[ f(X_n) | X_0 = x \big] $
(uniform in $n$) that is of independent interest.
All the above theory is developed in Section \ref{sec2}. In Section \ref{sec3}, we apply our theory by developing a computable bound on the potential
\[
\sum_{n=0}^{\infty} \mathbb{E} \big[ f_c(X_n) | X_0 = x \big].
\]
This potential appears in many settings, especially in bounding the bias of a steady-state simulation initialized at $X_0 = x$; see \cite{asmussen2007stochastic}.
Finally, in Section \ref{sec-example}, we present a detailed example to compare our results with those of \cite{ledoux23-pe}.

\section{A Computable Bound on the Solution of Poisson's Equation}\label{sec2}

We assume throughout the remainder of this paper that $f: S \to \mathbb{R}_+$ is non-negative. (If $f$ is of mixed sign, we can apply our bounds separately to the positive and negative parts of $f$.) For $x\in S$, let $\mathbb{P}_x(\cdot)=\mathbb{P}(\cdot|X_0=x)$ and let $\mathbb{E}_x[\cdot]$ be the expectation associated with $\mathbb{P}_x(\cdot)$. Our key assumption is:
\begin{assumption}\label{assume1}
There exists a non-empty subset $C \subset S$, non-negative functions $v_1, v_2: S \to \mathbb{R}_+$, a probability measure $\varphi$ on $S$, an integer $m \geq 1$, and positive constants $\lambda, b_1$, and $b_2$ for which:
\begin{enumerate}
    \item[(i)] $(P v_1)(x) \leq v_1(x) - f(x) + b_1 \mathbb{I}_C(x)$ for $x \in S$;
    \item[(ii)] $(P v_2)(x) \leq v_2(x) - 1 + b_2 \mathbb{I}_C(x)$ for $x \in S$;
    \item[(iii)] $\mathbb{P}_x(X_m\in \cdot) \geq \lambda \varphi(\cdot)$ for $x \in C$,
\end{enumerate}
where $\mathbb{I}_C(x) = 1$ or 0 depending on whether or not $x \in C$.
\end{assumption}

\begin{remark}
Assumption \ref{assume1} asserts that $C$ is a small set for $X$ and that $X$ is a positive recurrent Harris chain with $\pi f<\infty$; see \cite{meyn09}. Conversely, if $X$ is a positive recurrent Harris chain with $\pi f < \infty$, then Assumption \ref{assume1} ``almost'' holds, in the sense that there exist $C, v_1, v_2, m, \lambda, b_1, b_2$ for which Assumption \ref{assume1} is valid with (i) and (ii) holding for $\pi$-a.e. $x$.
\end{remark}

To obtain our bound, we first derive a new probabilistic representation for the solution to Poisson's equation. This probabilistic representation involves a randomized stopping time $\tau$. To construct $\tau$, let
$
T_1 = \inf \{ n \geq 0 : X_n \in C \}
$
be the first hitting time of $C$, and for $i \geq 1$, let
$
T_{i+1} = \inf \{ n \geq T_i + m : X_n \in C \},
$
so that the $T_i$'s are successive hitting times of $C$ spaced so that at least $m$ time steps elapse between each hitting time.

A key observation going back to \cite{athreya1978new} and \cite{nummelin1978splitting} is that Assumption \ref{assume1}(iii) allows one to write \(\mathbb{P}_x(X_{m}\in \cdot)\) over \(C\) as a mixture distribution, namely
\begin{equation}\label{mixture-distribution}
\mathbb{P}_x(X_{m}\in \cdot) = \lambda \varphi(\cdot) + (1 - \lambda) Q(x, \cdot),
\end{equation}
where \(Q(x,\cdot)\) is defined via \eqref{mixture-distribution}. Now suppose that $X$ has evolved up to time $T_1$. Given the mixture \eqref{mixture-distribution}, it is natural to generate a Bernoulli rv \(B_{T_1}\) with parameter \(\lambda\), and then distribute \(X_{T_1+m}\) according to \(\varphi\) if \(B_{T_1} = 1\) and according to \(Q(X_{T_1}, \cdot)\) if \(B_{T_1} = 0\). We now simulate the intermediate values \((X_{T_1+1}, \dots, X_{T_{1}+m-1})\) from the conditional distribution \(R(X_{T_1}, X_{T_1+m}, \cdot)\), where
\[
R(x, y, \cdot) = \mathbb{P}_x \big( (X_1, \dots, X_{m-1}) \in \cdot \, \big|\, X_m = y \big)
\]
for $x, y \in S$. Similarly, having simulated $X$ to time $T_n$ (using $B_{T_1}, \dots, B_{T_{n-1}}$), we generate the Bernoulli($\lambda$) rv $B_{T_n}$, generate $X_{T_n + m}$ using either $\varphi$ or $Q(X_{T_n},\cdot)$ depending on whether or not $B_{T_n}$ equals 1 or $0$, generate $(X_{T_n + 1}, \dots$, $X_{T_{n} +m - 1})$ using $R(X_{T_n}, X_{T_n + m},\cdot)$, and then simulate the path of $X$ from $T_{n}+m$ to $T_{n+1}$ using the one-step transition kernel $P$. Note that this construction preserves the distribution of $(X_n : n \geq 0)$.

We now let
$
\beta = \inf \{ n \geq 1 : B_{T_n} = 1 \}
$
and set
\begin{equation}\label{tau-definition}
\tau = T_\beta + m.
\end{equation}
Observe that
\begin{equation}\label{2.2}
(X_0,\dots,X_{T_{\beta}}, \tau) \quad  \mbox{and}\quad (X_\tau, X_{\tau + 1}, \dots)
\end{equation}
are independent and
\begin{equation}\label{2.3}
\mathbb{P}_x \big( (X_{\tau}, X_{\tau+1}, \dots) \in \cdot \big) = \mathbb{P}_\varphi \big( (X_0, X_1, \dots) \in \cdot \big),
\end{equation}
where
$
\mathbb{P}_\mu(\cdot) \triangleq \int_S \mu(dx) \mathbb{P}_x(\cdot)
$
for an arbitrary probability measure $\mu$ on $S$. (Similarly,
$
\mathbb{E}_\mu[\cdot] = \int_S \mu(dx) \mathbb{E}_x[\cdot].
$)
The randomized stopping time $\tau$ makes $X$ a \textit{wide-sense regenerative process} (see \cite{thorisson2000coupling}).

The key tool in our method is the Comparison Theorem, typically stated for any stopping time (see p.343 of \cite{meyn09}). In the following, we establish its extension to the case of the randomized stopping time $\tau$ as defined in \eqref{tau-definition}.

\begin{lemma}[Generalized Comparison Theorem]\label{generalized-comparison}
Suppose that there exist nonnegative functions $v$, $f$ and $s$ on $S$ such that for any $x\in S$,
\[
(Pv)(x)\leq v(x)-f(x)+s(x).
\]
Then for the randomized stopping time $\tau$ defined in \eqref{tau-definition}, we have
\[
\mathbb{E}_x \sum_{i=0}^{\tau-1} f(X_i) \leq v(x)+\mathbb{E}_x \sum_{i=0}^{\tau -1}s(X_i).
\]
\end{lemma}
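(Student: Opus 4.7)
The plan is to reduce to the classical Comparison Theorem (p.\,343 of \cite{meyn09}) by constructing an enlarged filtration $(\mathcal{G}_n)_{n \geq 0}$ such that (i) $\tau$ is an ordinary $(\mathcal{G}_n)$-stopping time and (ii) $(X_n)$ remains a Markov chain with kernel $P$ under $(\mathcal{G}_n)$. Once these are in hand, the drift hypothesis immediately makes $M_n := v(X_n) + \sum_{i=0}^{n-1}[f(X_i) - s(X_i)]$ a $(\mathcal{G}_n)$-supermartingale, and the usual optional-stopping argument applied to the bounded time $\tau \wedge N$---dropping the nonnegative term $v(X_{\tau \wedge N})$ and then letting $N \to \infty$ by monotone convergence, valid because $f, s \geq 0$---delivers the claimed inequality.

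I take
\[
\mathcal{G}_n \;=\; \sigma(X_0, \ldots, X_n) \,\vee\, \sigma\bigl(B_{T_k} : T_k + m \leq n\bigr),
\]
so that $B_{T_k}$ is adjoined to $\mathcal{G}_n$ only at time $T_k + m$, after the mixture-dependent block $(X_{T_k+1}, \ldots, X_{T_k+m})$ has been fully observed. Under this filtration, $\tau = T_\beta + m$ is a stopping time because $\{\tau \leq n\}$ is a countable union, over $k$ with $T_k + m \leq n$, of events $\{B_{T_1} = \cdots = B_{T_{k-1}} = 0,\, B_{T_k} = 1\}$, each of which is $\mathcal{G}_n$-measurable. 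For the Markov identity $\mathbb{E}[\psi(X_{n+1}) \mid \mathcal{G}_n] = (P\psi)(X_n)$, the construction preserves the marginal law of $(X_n)$, so it suffices to show that $X_{n+1}$ is conditionally independent of $\sigma(B_{T_k} : T_k + m \leq n)$ given $(X_0, \ldots, X_n)$. This I would verify directly from the generation scheme: for each $k$ with $T_k + m \leq n$, the chain evolves from time $T_k + m$ onward via the natural kernel $P$, so conditional on $X_{T_k + m}$, the variable $B_{T_k}$ is independent of $X_{T_k + m + 1}, X_{T_k + m + 2}, \ldots$.

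The main obstacle is precisely this last conditional-independence step, because $B_{T_k}$ is \emph{not} globally independent of $X$: it jointly determines $(X_{T_k+1}, \ldots, X_{T_k+m})$ through the mixture decomposition \eqref{mixture-distribution}. The timing of the enlargement is therefore delicate---revealing $B_{T_k}$ at time $T_k$ instead would break the identity $\mathbb{E}[\psi(X_{T_k+1}) \mid \mathcal{G}_{T_k}] = (P\psi)(X_{T_k})$, since conditional on $B_{T_k}$ the law of $X_{T_k+1}$ is one of the two mixture components rather than $P(X_{T_k}, \cdot)$. Postponing the revelation of $B_{T_k}$ until time $T_k + m$ repairs the Markov property while still keeping $\tau$ a stopping time, which is what allows the classical Comparison Theorem to apply.
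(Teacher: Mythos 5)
Your proposal is correct in outline and takes a genuinely different route from the paper. The paper works with the \emph{full} filtration $\mathcal{F}_n = \sigma\bigl((X_j,B_j): 0 \le j \le n\bigr)$, under which the Markov identity $\mathbb{E}_x[v(X_{n+1})\mid\mathcal{F}_n] = (Pv)(X_n)$ fails globally (as the authors stress in the remark following the lemma). Instead they observe that the identity holds on the event $\{\tau \le i-1\}$, and they directly manipulate the telescoping sum $\sum_{i=1}^{\tau\wedge n}[v(X_i)-(Pv)(X_{i-1})]$, splitting each term by $\mathbb{I}(\tau \ge i)$ versus $\mathbb{I}(\tau < i)$ and showing the whole expected sum vanishes, after which they hand off to the standard argument in \cite{meyn09}. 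Your approach instead shrinks the filtration to $\mathcal{G}_n = \sigma(X_0,\ldots,X_n) \vee \sigma(B_{T_k}: T_k+m \le n)$, chosen precisely so that $\tau$ is an honest $(\mathcal{G}_n)$-stopping time while the $(X_n)$-Markov property is restored, and then invokes the classical Comparison Theorem verbatim. This is a cleaner reduction conceptually: once the filtration is in hand, there is no custom martingale bookkeeping at all. What the paper's approach buys in exchange is that it avoids having to formally establish a ``delayed enlargement'' preserves the Markov property; the restricted statement on $\{\tau\le k\}$ is somewhat easier to see from the wide-sense regeneration structure \eqref{2.2}--\eqref{2.3}.

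The one substantive point you leave open---and you flag it yourself---is the conditional-independence claim $X_{n+1} \perp \sigma(B_{T_k}: T_k+m\le n) \mid \sigma(X_0,\ldots,X_n)$. This does hold, and the reason is exactly the one you sketch: once $X_{T_k+m}$ is realized, everything generated after time $T_k+m$ uses fresh randomness depending only on $X_{T_k+m}$, so $(X_{T_k+m+1},X_{T_k+m+2},\ldots)$ is conditionally independent of $(X_0,\ldots,X_{T_k+m-1},B_{T_1},\ldots,B_{T_k})$ given $X_{T_k+m}$, and the weak-union rule then gives the stated independence given $(X_0,\ldots,X_n)$ for any $n \ge T_k+m$. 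To make this fully rigorous you would want to condition on the value of $T_k$ (a stopping time of the natural filtration of $X$) and invoke the strong Markov property of the augmented construction; writing that out carefully is the part of your proof that would require real work, and it is precisely the work the paper's proof sidesteps by its different choice of filtration. As written, this is a gap that an expert would accept as routine but that the proof itself does not close.
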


\begin{proof}
To simplify the argument, assume we generate the Bernoulli rv's at every time step rather than just at the $T_j$'s but only use the $B_{T_j}$'s to implement the mixture step. These additional Bernoulli rv's have no effect on the joint distribution of $X$ and $\tau$.
We define the filtration
\[
\mathcal{F}_n = \sigma((X_j, B_j) : 0 \leq j \leq n).
\]
On \(\{\tau = k\}\), we see that for \(n \geq k\),
\[
\mathbb{E}_x[v(X_n) \mid \mathcal{F}_k] = (P^{n-k} v)(X_k),
\]
which implies that for \(n \geq k\),
\[
\mathbb{E}_x[v(X_{n+1}) \mathbb{I}(\tau \leq k) \mid \mathcal{F}_{n}] = (P v)(X_{n}) \mathbb{I}(\tau \leq k).
\]
Our assumption on $v$ ensures that $v(X_n)$ is $\mathbb{P}_x$-integrable for $n \geq 0$, so $v(X_i)-Pv(X_{i-1})$ is a martingale difference for $i\geq 1$. Hence,
\begin{align*}
&\quad \mathbb{E}_x  \sum_{i=1}^{\tau \wedge n}  \left[ v(X_i) - (Pv)(X_{i-1}) \right]\\
&= \sum_{i=1}^{n} \mathbb{E}_x \left[ v(X_i) - (Pv)(X_{i-1}) \right] \mathbb{I}(\tau \geq i)\\
&= \sum_{i=1}^{n} \mathbb{E}_x \left[ v(X_i) - (Pv)(X_{i-1}) \right]- \sum_{i=1}^{n} \mathbb{E}_x \left[ v(X_i) - (Pv)(X_{i-1}) \right] \mathbb{I}(\tau<i)\\
&= 0 - \sum_{i=1}^{n} \mathbb{E}_x \mathbb{E}_x \left[ v(X_i) \mathbb{I}(\tau\leq i-1) \mid \mathcal{F}_{i-1} \right]+ \sum_{i=1}^{n} \mathbb{E}_x (Pv)(X_{i-1}) \mathbb{I}(\tau\leq i-1)\\
&= - \sum_{i=1}^{n} \mathbb{E}_x  (Pv)(X_{i-1}) \mathbb{I}(\tau\leq i-1) + \sum_{i=1}^{n} \mathbb{E}_x (Pv)(X_{i-1}) \mathbb{I}(\tau\leq i-1)= 0.
\end{align*}
Thus, we obtain
\[
\mathbb{E}_x \sum_{i=1}^{\tau \wedge n} v(X_i) = \mathbb{E}_x \sum_{i=1}^{\tau \wedge n} (Pv)(X_{i-1}) .
\]
The rest of the proof follows the argument on p.265 of \cite{meyn09}.
\end{proof}

\begin{remark}
Note that the Comparison Theorem does not generally hold for all randomized stopping times. In particular, for \(n> T_\beta\),
\[
\mathbb{E}_x[v(X_n) \mid \mathcal{F}_{T_\beta}] \neq (P^{n-T_\beta} v)(X_{T_\beta}).
\]
This discrepancy arises because the post \(T_\beta\)-chain (i.e. \((X_{T_\beta + k} : k \geq 0)\)) does not evolve according to the transition kernel \(P\). Instead, for \(T_\beta < n \leq T_{\beta}+m\),
\[
\mathbb{E}_x[v(X_n) \mid \mathcal{F}_{T_\beta}] = \int_S \int_S \varphi(dy) R_{n - T_\beta}(X_{T_\beta}, y, dz) v(z),
\]
where \(R_j(x, y, dz) = \mathbb{P}_x(X_j \in dz \mid X_{m} = y)\).
\end{remark}

\begin{proposition}\label{proposition1}
Under Assumption \ref{assume1}, $\varphi v_i<\infty$ for $i=1, 2$ and
\begin{align}
\label{2.4}   \mathbb{E}_x  \sum_{j=0}^{\tau - 1} f(X_j)  &\leq v_1(x) + \frac{b_1 m}{\lambda},\\
\label{2.5}    \mathbb{E}_x \tau &\leq v_2(x) + \frac{b_2 m}{\lambda}, \\
\label{2.6}    \mathbb{E}_\varphi  \sum_{j=0}^{\tau - 1} f(X_j)  &\leq \delta_1\triangleq \min \left\{ \inf_{y \in S} v_1(y) + \frac{2b_1 m}{\lambda}, \varphi v_1+ \frac{b_1 m}{\lambda} \right\},\\
\label{2.7}    \mathbb{E}_\varphi \tau &\leq \delta_2 \triangleq \min \left\{ \inf_{y \in S} v_2(y) + \frac{2b_2 m}{\lambda}, \varphi v_2+ \frac{b_2 m}{\lambda} \right\}.
\end{align}
\end{proposition}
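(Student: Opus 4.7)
The plan is to derive all four inequalities via a uniform recipe: invoke the Generalized Comparison Theorem (Lemma~\ref{generalized-comparison}) for an appropriate triple $(v,f,s)$, and then control $\mathbb{E}_x \sum_{i=0}^{\tau-1} \mathbb{I}_C(X_i)$ by a direct pathwise count. The key observation is that, within each block $[T_i, T_{i+1})$ for $i = 1,\ldots,\beta-1$, only the first $m$ time points $T_i, T_i+1, \ldots, T_i+m-1$ can possibly lie in $C$ (since by definition of $T_{i+1}$ the chain is outside $C$throughout $[T_i+m, T_{i+1})$), while $[0, T_1)$ contributes nothing, and the same $m$-point bound holds for the terminal block $[T_\beta, \tau) = [T_\beta, T_\beta + m)$. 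Hence pathwise
\[
\sum_{i=0}^{\tau-1} \mathbb{I}_C(X_i) \leq m\beta,
\]
and since the $B_{T_i}$'s are i.i.d.\ Bernoulli($\lambda$) independent of $X$, the index $\beta$ is geometric with parameter $\lambda$, giving $\mathbb{E}_x(m\beta) = m/\lambda$.

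Given this count, applying Lemma~\ref{generalized-comparison} with $v = v_1$, forcing $f$, and $s = b_1 \mathbb{I}_C$ (valid by Assumption~\ref{assume1}(i)) immediately yields \eqref{2.4}, and the same lemma with $v = v_2$, forcing $1$, and $s = b_2 \mathbb{I}_C$ yields \eqref{2.5}. To see that $\varphi v_i < \infty$, I would iterate the drift inequalities: since $f \geq 0$, Assumption~\ref{assume1}(i)--(ii) imply $Pv_i \leq v_i + b_i \mathbb{I}_C \leq v_i + b_i$, so by induction $P^m v_i \leq v_i + m b_i$; combining with the minorization at any $x \in C$ produces $\lambda \varphi v_i \leq (P^m v_i)(x) \leq v_i(x) + m b_i < \infty$.

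For \eqref{2.6} and \eqref{2.7}, integrating \eqref{2.4} and \eqref{2.5} against $\varphi$ immediately delivers the bounds $\varphi v_i + b_i m/\lambda$. To obtain the companion $\inf_y$ bounds, I would run the argument across two regenerative cycles: let $\tilde\beta$ denote the index of the second occurrence of $B_{T_i} = 1$ and set $\tilde\tau = T_{\tilde\beta} + m$. The independence/distributional identity in \eqref{2.2}--\eqref{2.3} gives
\[
\mathbb{E}_x \tilde\tau = \mathbb{E}_x \tau + \mathbb{E}_\varphi \tau, \qquad \mathbb{E}_x \sum_{j=0}^{\tilde\tau-1} f(X_j) = \mathbb{E}_x \sum_{j=0}^{\tau-1} f(X_j) + \mathbb{E}_\varphi \sum_{j=0}^{\tau-1} f(X_j).
\]
Since $\tilde\tau$ is a stopping time of the same filtration $(\mathcal{F}_n)$, the martingale argument of Lemma~\ref{generalized-comparison} applies verbatim to $\tilde\tau$, and the same pathwise count yields $\sum_{i=0}^{\tilde\tau-1} \mathbb{I}_C(X_i) \leq m\tilde\beta$ with $\mathbb{E}_x \tilde\beta = 2/\lambda$. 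This gives $\mathbb{E}_x \tilde\tau \leq v_2(x) + 2 b_2 m/\lambda$ for every $x$, and subtracting $\mathbb{E}_x \tau \geq 0$ followed by taking $\inf_x$ produces the second term of \eqref{2.7}; the analogous computation with $v_1$ and $f$ produces the second term of \eqref{2.6}.

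The main point requiring care is the extension of Lemma~\ref{generalized-comparison} to the two-cycle stopping time $\tilde\tau$. This should be essentially bookkeeping: $\tilde\tau$ remains an $(\mathcal{F}_n)$-stopping time with finite expectation, the chain $X$ still evolves under $P$ away from the two regeneration windows, and the martingale-difference identity at the heart of the lemma depends only on these two features. The rest of the proof is then just the pathwise counting bound together with the geometric mean of $\tilde\beta$.
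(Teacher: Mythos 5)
Your proof is correct and follows essentially the same route as the paper's: apply Lemma~\ref{generalized-comparison} to $(v_1,f,b_1\mathbb{I}_C)$ and $(v_2,e,b_2\mathbb{I}_C)$, bound $\sum_{i=0}^{\tau-1}\mathbb{I}_C(X_i)$ pathwise by $m\beta$ using the fact that $X$ avoids $C$ on $[T_k+m,T_{k+1})$, get $\varphi v_i<\infty$ from iterating the drift and the minorization, and obtain the $\inf_y$ alternatives in \eqref{2.6}--\eqref{2.7} by running the same argument to a second regeneration time. Your $\tilde\tau$ is exactly the paper's $\tau_2$, and the remaining step (integrating \eqref{2.4}--\eqref{2.5} against $\varphi$) is identical.
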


\begin{proof} In view of the fact that $X$ does not visit $C$ between $T_j+m$ and $T_{j+1}$, Lemma \ref{generalized-comparison} yields the inequality
\begin{align*}
\mathbb{E}_x  \sum_{j=0}^{\tau - 1} f(X_j)  &\leq v_1(x) + b_1 \mathbb{E}_x \sum_{j=0}^{\tau - 1} \mathbb{I}_C(X_{j }) \\
&= v_1(x) + b_1 \mathbb{E}_x  \sum_{k=1}^{\beta} \sum_{j=0}^{m - 1} \mathbb{I}_C(X_{T_k+j }) \\
&\leq v_1(x)+b_1 m \mathbb{E}_x \beta = v_1(x) + \frac{b_1 m}{\lambda}.
\end{align*}
Putting $f=e$, we similarly obtain the second inequality.

For the third inequality, suppose we apply the same algorithm as used to construct $\tau$ to the path $(X_{\tau+j}: j \geq 0)$, thereby constructing a randomized stopping time $\tau_2$ such that $X_{\tau_2}$ has distribution $\varphi$ and is independent of $\tau_2$. We again apply Lemma \ref{generalized-comparison}, yielding the inequality
\[
\mathbb{E}_x \sum_{j=0}^{\tau_2 - 1} f(X_j)  \leq v_1(x) + \frac{2b_1 m}{\lambda},
\]
for $x \in S$. Also, for any $x \in S$,
\[
\mathbb{E}_x  \sum_{j=0}^{\tau_2 - 1} f(X_j)  \geq \mathbb{E}_x  \sum_{j=\tau}^{\tau_2 - 1} f(X_j)  = \mathbb{E}_{\varphi}  \sum_{j=0}^{\tau - 1} f(X_j) ,
\]
and hence
\[
\mathbb{E}_\varphi  \sum_{j=0}^{\tau - 1} f(X_j)  \leq \inf_{x \in S} \mathbb{E}_x  \sum_{j=0}^{\tau_2 - 1} f(X_j) \leq \inf_{x \in S} v_1(x)+\frac{2b_1 m}{\lambda}.
\]

Alternatively, note that Assumption \ref{assume1} implies that
\[
P v_i \leq v_i + b_i e \quad \text{for } i = 1, 2,
\]
from which it follows that
\begin{equation}\label{2.7a}
P^n v_i \leq v_i + n b_i e,
\end{equation}
for $i=1,2$ and $n\geq 0$.
Consequently, for $x \in C$,
\[
\varphi v_i \leq \frac{(P^m v_i)(x)}{\lambda} \leq \frac{1}{\lambda}(v_i(x)+mb_i) <\infty
\]
for $i = 1, 2$. So, integrating the upper bound \eqref{2.4} with respect to $\varphi$, we obtain the alternative upper bound
\[
\mathbb{E}_\varphi  \sum_{j=0}^{\tau - 1} f(X_j)  \leq \varphi v_1 + \frac{b_1 m}{\lambda}.
\]

For the final inequality \eqref{2.7}, we apply the same argument as for $f = e$.

\end{proof}

\begin{remark}
An implication of Proposition \ref{proposition1} is that $\mathbb{E}_\varphi \tau$ and $\mathbb{E}_\varphi  \sum_{j=0}^{\tau - 1} f(X_j) $ are finite.
\end{remark}

We note that Assumption \ref{assume1} also implies that $\pi f \leq b_1$; see \cite{glynn08}. Consequently, Proposition \ref{proposition1} ensures that
\begin{equation}\label{2.8}
g^*(x) = \mathbb{E}_x  \sum_{j=0}^{\tau - 1} f_c(X_j)
\end{equation}
is finite-valued for each $x \in S$. If we can establish that $g^*$ solves Poisson's equation, then Proposition \ref{proposition1} immediately yields bounds on the solution to Poisson's equation.

When $m = 1$, this is easy to argue. We first need the following result. Let
\begin{equation}\label{nu-definition}
\nu(\cdot) = \frac{\mathbb{E}_\varphi  \sum_{j=0}^{\tau - 1} \mathbb{I}(X_j \in \cdot) }{\mathbb{E}_\varphi \tau},
\end{equation}
and note that \eqref{2.7} implies that $\nu(\cdot)$ is a probability measure on $S$.

\begin{proposition}\label{proposition2}
Under Assumption \ref{assume1}, $\pi = \nu$.
\end{proposition}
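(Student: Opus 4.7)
The plan is to show that $\nu$ is a stationary probability measure for $P$ and then invoke uniqueness of the stationary distribution for a positive Harris chain. Write $\eta(A)\triangleq\mathbb{E}_\varphi\sum_{n=0}^{\tau-1}\mathbb{I}_A(X_n)$, so that $\nu=\eta/\mathbb{E}_\varphi\tau$ by \eqref{nu-definition}; it suffices to prove $\eta P=\eta$.

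The key identity has already been derived inside the proof of Lemma~\ref{generalized-comparison}: for any bounded measurable $h$, any $x\in S$, and any $n\geq 1$,
\[
\mathbb{E}_x\sum_{i=1}^{\tau\wedge n}h(X_i)\;=\;\mathbb{E}_x\sum_{i=1}^{\tau\wedge n}(Ph)(X_{i-1}).
\]
Integrating $x$ against $\varphi$, boundedness of $h$ together with $\mathbb{E}_\varphi\tau\leq\delta_2<\infty$ from Proposition~\ref{proposition1} justifies dominated convergence as $n\to\infty$, giving
\[
\mathbb{E}_\varphi\sum_{i=1}^{\tau}h(X_i)\;=\;\mathbb{E}_\varphi\sum_{i=0}^{\tau-1}(Ph)(X_i)\;=\;(\eta P)(h).
\]
Rewriting the left-hand side by the index shift $\sum_{i=1}^{\tau}h(X_i)=\sum_{i=0}^{\tau-1}h(X_i)+h(X_\tau)-h(X_0)$, and using $X_0\sim\varphi\sim X_\tau$ (the latter by \eqref{2.3}), we obtain
\[
\mathbb{E}_\varphi\sum_{i=1}^{\tau}h(X_i)\;=\;\eta(h)+\mathbb{E}_\varphi h(X_\tau)-\mathbb{E}_\varphi h(X_0)\;=\;\eta(h).
\]
Hence $(\eta P)(h)=\eta(h)$ for every bounded measurable $h$, so $\eta P=\eta$ as measures. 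Since $\eta(S)=\mathbb{E}_\varphi\tau<\infty$, the normalized measure $\nu$ is a stationary probability for $P$, and uniqueness of the stationary distribution for the positive Harris chain guaranteed by Assumption~\ref{assume1} forces $\nu=\pi$.

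The main technical step is the limit interchange used to pass from the stopped identity of Lemma~\ref{generalized-comparison} to its $n=\infty$ form; for bounded $h$ this is routine, since the summand is dominated by $\|h\|_\infty\tau$ with $\mathbb{E}_\varphi\tau<\infty$. The entire invariance argument ultimately rests on the two structural facts captured by \eqref{2.2}-\eqref{2.3}, namely that $(X_0,\dots,X_{\tau-1},\tau)$ is independent of $(X_\tau,X_{\tau+1},\dots)$ and that $X_\tau\sim\varphi$; the latter is exactly what makes the boundary terms $\mathbb{E}_\varphi h(X_\tau)-\mathbb{E}_\varphi h(X_0)$ cancel, turning the stopped martingale identity into the invariance equation $\eta=\eta P$.
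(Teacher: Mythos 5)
Your proof is correct and follows essentially the same route as the paper: both establish $\eta P = \eta$ by combining the stopped martingale/tower identity with the boundary cancellation $\mathbb{E}_\varphi h(X_\tau)=\varphi h=\mathbb{E}_\varphi h(X_0)$ guaranteed by \eqref{2.3}. The only organizational difference is that you reuse the intermediate identity $\mathbb{E}_x\sum_{i=1}^{\tau\wedge n}h(X_i)=\mathbb{E}_x\sum_{i=1}^{\tau\wedge n}(Ph)(X_{i-1})$ from inside the proof of Lemma~\ref{generalized-comparison} (valid for bounded $h$ since only integrability is needed there), whereas the paper re-derives the analogous telescoping chain for non-negative $h$ directly within the proof of Proposition~\ref{proposition2}.
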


\begin{proof} Observe that for $h: S \to \mathbb{R}_+$,
\begin{align*}
\mathbb{E}_\varphi  \sum_{j=0}^{\tau - 1} h(X_j) &= \varphi h + \mathbb{E}_\varphi  \sum_{j=1}^{\tau - 1} h(X_j)\\
 &= \mathbb{E}_\varphi  \sum_{j=1}^{\tau - 1} h(X_j) +\mathbb{E}_\varphi  h(X_\tau) \\
 &= \sum_{j=1}^\infty \mathbb{E}_\varphi  h(X_j) \mathbb{I}(\tau \geq  j)\\
 &= \sum_{j=1}^\infty \mathbb{E}_\varphi  h(X_j)  - \sum_{j=1}^\infty \mathbb{E}_\varphi  h(X_j) \mathbb{I}(\tau <  j)  \\
 &= \sum_{j=1}^\infty \mathbb{E}_\varphi \mathbb{E}_\varphi[ h(X_j) \big| \mathcal{F}_{j-1} ]- \sum_{j=1}^{\infty} \mathbb{E}_\varphi  \mathbb{E}_\varphi\left[ h(X_j) \mathbb{I}(\tau < j) \big| \mathcal{F}_{j-1} \right]\\
 &= \sum_{j=1}^\infty \mathbb{E}_\varphi (Ph)(X_{j-1}) - \sum_{j=1}^{\infty} \mathbb{E}_\varphi  (Ph)(X_{j-1}) \mathbb{I}(\tau < j) \\
 &= \sum_{k=0}^\infty \mathbb{E}_\varphi  (Ph)(X_k) \mathbb{I}(\tau > k) = \mathbb{E}_\varphi  \sum_{k=0}^{\tau-1} (Ph)(X_k) ,
\end{align*}
so it follows that $\nu Ph = \nu h$ for all non-negative $h$. Since $X$ has a unique stationary distribution (as a Harris chain), $\nu = \pi$.
\end{proof}

Note that we can write
\[
\sum_{j=0}^{\tau - 1} f_c(X_j) = \gamma((X_j, B_j) : j \geq 0),
\]
where $\gamma: (S \times \{0, 1\})^\infty \to \mathbb{R}$ is a deterministic (measurable) mapping. In the case of $m=1$, we have on \(\{X_0 \notin C\}\),
\begin{equation}\label{2.10}
\sum_{j=0}^{\tau-1} f_c(X_j) = f_c(X_0) + \gamma((X_{j+1}, B_{j+1}) : j \geq 0),
\end{equation}
whereas on \(\{X_0 \in C\}\),
\begin{equation}\label{2.11}
\sum_{j=0}^{\tau-1} f_c(X_j) = f_c(X_0) + \mathbb{I}(B_0 = 0)\gamma((X_{j+1}, B_{j+1}) : j \geq 0).
\end{equation}

Taking expectations in \eqref{2.10} and \eqref{2.11} and applying the Markov property, we obtain
\begin{equation}\label{2.12}
g^*(x) = f_c(x) + \mathbb{E}_x g^*(X_1)
\end{equation}
for \(x \notin C\), while
\begin{equation}\label{2.13}
g^*(x) = f_c(x) + (1 - \lambda) \int_S Q(x, dy) g^*(y),
\end{equation}
for \(x \in C\). But \(\pi f_c = 0\), so \(\nu f_c = 0\) (see Proposition \ref{proposition2}) and hence \(\varphi g^* = 0\). So, we can rewrite \eqref{2.13} as
\begin{align*}
g^*(x) & = f_c(x) + (1 - \lambda) \int_S Q(x, dy) g^*(y) + \lambda \varphi g^*\\
 & = f_c(x) + (P g^*)(x)
\end{align*}
for $x\in C$, proving that \(g^*\) is a solution of Poisson's equation for \(m = 1\).

However, for \( m \geq 2 \), the above approach fails, because the path-shifting property in \eqref{2.11} fails. Note that the ``cycle''
\( (X_0, \dots, X_{\tau-1}) \) has alternating ``phases'', alternating between phases in which \( X \) is generated by conditioning on the ``endpoint conditional distribution'' \( R \) and phases where $X$ is generated step-by-step using $P$.
When $X$ is in a phase in which $R$ is generating the path segment $(X_{T_i +1},\ldots,X_{T_i +m-1})$ and \( X \) enters \( C \), the Bernoulli coin toss mechanism is temporarily disabled and is not free to schedule another
mixture allocation between \( \varphi \) and \( Q \).
As a consequence, when \( m \geq 2 \) and \( X_0 \in C \), there is no simple shift representation as in \eqref{2.11}.

Instead, we use a new and different argument here that exploits properties \eqref{2.2} and \eqref{2.3}.
A first step is the following new computable bound on the ``marginal'' expectation \( \mathbb{E}_x f(X_n) \), which is uniform in \( n \).

\begin{proposition}\label{proposition3}
Under Assumption \ref{assume1},
\begin{equation}\label{2.14}
\mathbb{E}_x f(X_n) \leq v_1(x) + \frac{b_1 m}{\lambda} + \delta_1
\end{equation}
for \( x \in S \), where $\delta_1$ is defined as in Proposition \ref{proposition1}.
\end{proposition}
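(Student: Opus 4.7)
The plan is first to show the uniform bound $\mathbb{E}_\varphi f(X_j)\leq \delta_1$ for every $j\geq 0$ by a renewal argument, and then to transfer that bound to $\mathbb{E}_x f(X_n)$ via the wide-sense regenerative decomposition at $\tau$ together with Proposition \ref{proposition1}.

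For the first step, I would iterate the construction of $\tau$: setting $\tau^{(0)}=0$ and, for $k\geq 1$, applying the algorithm that produced $\tau$ to the shifted path $(X_{\tau^{(k-1)}+j}:j\geq 0)$ to produce $\tau^{(k)}-\tau^{(k-1)}$. Combining \eqref{2.2} with \eqref{2.3}, the increments $\{\tau^{(k)}-\tau^{(k-1)}\}_{k\geq 1}$ are i.i.d.\ copies of $\tau$ under $\mathbb{P}_\varphi$, and conditional on $\tau^{(k)}=\ell$ the post-$\tau^{(k)}$ trajectory is distributed as $(X_j:j\geq 0)$ under $\mathbb{P}_\varphi$. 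Partitioning according to the unique $k$ with $\tau^{(k)}\leq j<\tau^{(k+1)}$ then yields the renewal representation
\[
\mathbb{E}_\varphi f(X_j)=\sum_{\ell=0}^j U_\varphi(\ell)\, h(j-\ell),
\]
where $U_\varphi(\ell)=\sum_{k\geq 0}\mathbb{P}_\varphi(\tau^{(k)}=\ell)$ and $h(j)=\mathbb{E}_\varphi[f(X_j)\mathbb{I}(\tau>j)]$. The key observation is that the $\tau^{(k)}$'s are strictly increasing (indeed $\tau^{(k+1)}-\tau^{(k)}\geq m\geq 1$), so at most one $\tau^{(k)}$ can equal any given $\ell$, forcing $U_\varphi(\ell)\leq 1$. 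Consequently
\[
\mathbb{E}_\varphi f(X_j)\leq \sum_{i=0}^\infty h(i)=\mathbb{E}_\varphi\sum_{i=0}^{\tau-1} f(X_i)\leq \delta_1
\]
by \eqref{2.6}.

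For the second step, I would split
\[
\mathbb{E}_x f(X_n)=\mathbb{E}_x\big[f(X_n)\mathbb{I}(\tau>n)\big]+\mathbb{E}_x\big[f(X_n)\mathbb{I}(\tau\leq n)\big].
\]
On $\{\tau>n\}$ the index $n$ lies inside $\{0,\dots,\tau-1\}$, so the first term is dominated by $\mathbb{E}_x\sum_{j=0}^{\tau-1} f(X_j)$ and is therefore bounded by $v_1(x)+b_1 m/\lambda$ via Proposition \ref{proposition1}. For the second term, conditioning on $\tau$ and invoking \eqref{2.2}--\eqref{2.3} gives
\[
\mathbb{E}_x\big[f(X_n)\mathbb{I}(\tau\leq n)\big]=\sum_{\ell=0}^n\mathbb{P}_x(\tau=\ell)\,\mathbb{E}_\varphi f(X_{n-\ell})\leq \delta_1\,\mathbb{P}_x(\tau\leq n)\leq \delta_1,
\]
applying the step~1 bound. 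Adding the two contributions yields \eqref{2.14}.

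The main obstacle will be rigorously justifying the iterated wide-sense regenerative construction so that the renewal convolution identity is valid; once that is in place, the decisive point is the simple but crucial observation that strict monotonicity of the $\tau^{(k)}$'s forces $U_\varphi(\ell)\leq 1$ pointwise, which is exactly what prevents the otherwise linearly growing renewal mass from spoiling uniformity in $n$.
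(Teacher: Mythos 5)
Your proposal is correct and follows essentially the same route as the paper: the paper also derives the renewal equation $a_n = b_n + \sum_j a_{n-j} p_j$ for $a_n = \mathbb{E}_\varphi f(X_n)$, solves it as $a_n = \sum_j b_{n-j} u_j$, bounds the renewal sequence by $u_j \le 1$ (via the same point-process interpretation you give through the strictly increasing $\tau^{(k)}$'s), deduces $a_n \le \mathbb{E}_\varphi\sum_{j=0}^{\tau-1}f(X_j) \le \delta_1$, and then uses the identical wide-sense regenerative split of $\mathbb{E}_x f(X_n)$ at $\tau$ to obtain the final bound.
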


\begin{proof}
Put \( a_n = \mathbb{E}_\varphi f(X_n) \) and note that properties \eqref{2.2} and \eqref{2.3} allow us to show that \( (a_n: n\geq 0) \) satisfies the renewal equation
\begin{align*}
a_n &= \mathbb{E}_\varphi f(X_n) \mathbb{I}(\tau > n) + \sum_{j = 1}^{n} \mathbb{E}_\varphi f(X_{\tau+n-j}) \mathbb{I}(\tau = j)\\
&=\mathbb{E}_\varphi f(X_n) \mathbb{I}(\tau > n) + \sum_{j = 1}^{n} \mathbb{E}_\varphi f(X_{n-j}) \mathbb{P}_\varphi(\tau=j)\\
&=b_n + \sum_{j=1}^n a_{n-j}p_j,
\end{align*}
where \( b_n = \mathbb{E}_\varphi f(X_n) \mathbb{I}(\tau > n) \) and $p_j=\mathbb{P}_\varphi(\tau=j)$.
Consequently, we have
\[
a_n = \sum_{j=0}^n b_{n-j} u_j,
\]
where \( (u_j: j \geq 0 ) \) is the renewal sequence associated with \( (p_j: j \geq 0) \). The term \( u_n \) can be interpreted as the probability that a point from the point process \( \mathcal{P} \) falls at the integer \( n \), where \( \mathcal{P} \) is defined as a renewal point process with i.i.d. inter-point distances $\tau$ following the distribution $( p_j : j \geq 0 )$ and an initial point at 0. As such, $u_n\leq 1$ for \( n \geq 0 \). Let \( a \wedge s  \triangleq \min\{a, s\} \), then we have
\begin{align*}
a_n &\leq \sum_{j = 0}^{n} \mathbb{E}_\varphi f(X_{j}) \mathbb{I}(\tau > j) = \mathbb{E}_\varphi  \sum_{j = 0}^{(\tau-1) \wedge  n } f(X_j) \leq \mathbb{E}_\varphi \sum_{j=0}^{\tau-1} f(X_j)
\end{align*}
for \( n \geq 0 \). For $x\in S$, observe that
\begin{align*}
\mathbb{E}_x f(X_n) &= \mathbb{E}_x f(X_n) \mathbb{I}(\tau > n) + \sum_{j = 0}^{n} \mathbb{E}_x f(X_{\tau+n-j}) \mathbb{I}(\tau = j)\\
&\leq \mathbb{E}_x \sum_{j=0}^{\tau-1} f(X_j) + \sum_{j = 0}^{n} \mathbb{E}_\varphi f(X_{n-j}) \mathbb{P}_x(\tau = j)\\
&\leq v_1(x) + \frac{b_1 m}{\lambda} + \max_{k \geq 0} \mathbb{E}_\varphi f(X_k) \cdot \sum_{j = 0}^{n} \mathbb{P}_x(\tau = j)\\
&\leq v_1(x) + \frac{b_1 m}{\lambda} + \mathbb{E}_\varphi \sum_{j=0}^{\tau-1} f(X_j) \\
& \leq v_1(x) + \frac{b_1 m}{\lambda} + \delta_1,
\end{align*}
where Proposition \ref{proposition1} is used for both the third and fifth inequalities above.
\end{proof}

The following result establishes that \( g^* \) is a solution of Poisson's equation.

\begin{theorem}\label{theorem1}
Suppose that Assumption \ref{assume1} holds. Then, \( g^* \) is a solution of Poisson's equation \eqref{1.1} for the forcing function \( f_c \).
\end{theorem}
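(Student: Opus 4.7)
The plan is to verify Poisson's equation in the form $g^*(x) = f_c(x) + (Pg^*)(x)$ by deriving and comparing two renewal-type representations of the marginal $c_{n+1}(x) := \mathbb{E}_x f_c(X_{n+1})$ --- one anchored at the original regeneration time $\tau$, and one at a ``shifted'' regeneration time $\tilde\tau$ obtained by re-running the splitting algorithm on $(X_{1+k}:k\geq 0)$ with fresh Bernoulli rv's.

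First I would write $g^*(x) = \sum_{n\geq 0} d_n(x)$ where $d_n(x) = \mathbb{E}_x[f_c(X_n)\mathbb{I}(\tau>n)]$, so that $d_0(x)=f_c(x)$ because $\tau \geq m \geq 1$. Absolute summability $\sum_n |d_n(x)| < \infty$ follows from $|f_c| \leq f + \pi f$ together with Proposition~\ref{proposition1} and $\pi f \leq b_1$, and a Fubini step (using that $\sum_n|d_n|(y)$ is integrable against $P(x,\cdot)$ by Assumption~\ref{assume1}(i)-(ii)) gives $(Pg^*)(x) = \sum_{n\geq 0}(Pd_n)(x)$. The strong Markov property at time $1$, together with the independence of the fresh Bernoulli rv's defining $\tilde\tau$, identifies $(Pd_n)(x) = \mathbb{E}_x[f_c(X_{n+1})\mathbb{I}(\tilde\tau>n)]$. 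Next, using \eqref{2.2}--\eqref{2.3} (on $\{\tau=j\}$ the post-regeneration chain is $\mathbb{P}_\varphi$-distributed and independent of the regeneration time, and likewise for $\tilde\tau$), I would derive
\begin{align*}
c_{n+1}(x) &= d_{n+1}(x) + \sum_{j=1}^{n+1}\mathbb{P}_x(\tau=j)\,c_{n+1-j}(\varphi),\\
c_{n+1}(x) &= (Pd_n)(x) + \sum_{j=1}^{n}\mathbb{P}_x(\tilde\tau=j)\,c_{n-j}(\varphi),
\end{align*}
where $c_i(\varphi) = \mathbb{E}_\varphi f_c(X_i)$. Subtracting and summing from $n=0$ to $N$ yields, after reindexing,
\[
\sum_{n=0}^{N}\bigl[d_{n+1}(x)-(Pd_n)(x)\bigr] \;=\; \sum_{j=1}^N \mathbb{P}_x(\tilde\tau=j)\,S_{N-j}(\varphi) \;-\; \sum_{j=1}^{N+1}\mathbb{P}_x(\tau=j)\,S_{N+1-j}(\varphi),
\]
with $S_k(\varphi) := \sum_{i=0}^k c_i(\varphi)$. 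Once the right-hand side is shown to vanish as $N\to\infty$, the identity $\sum_n d_{n+1}(x) = \sum_n (Pd_n)(x)$ rearranges to $g^*(x) - f_c(x) = (Pg^*)(x)$, which is exactly Poisson's equation \eqref{1.1}.

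The main obstacle is that $\sum_i|c_i(\varphi)|$ need not be finite --- Proposition~\ref{proposition3} supplies only a uniform bound on $|c_i(\varphi)|$, not summability --- so the Fubini interchange that would instantly cancel the two inner convolutions is unavailable. I would close this gap by working at the level of the partial sums $S_k(\varphi)$ and exploiting two ingredients: (i) the tails $\mathbb{P}_x(\tau>k)$ and $\mathbb{P}_x(\tilde\tau>k)$ are summable thanks to $\mathbb{E}_x\tau$, $\mathbb{E}_x\tilde\tau <\infty$ (Proposition~\ref{proposition1}); and (ii) $\mathbb{E}_\varphi\sum_{j=0}^{\tau-1}f_c(X_j)=0$ via Proposition~\ref{proposition2} centers the renewal increments and prevents $S_k(\varphi)$ from growing linearly. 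Converting these two ingredients into a clean Abel-summation estimate that drives the above residual to zero is the most delicate step of the argument, and it is precisely where the renewal-equation technique replaces the simple path-shift argument that works only when $m=1$.
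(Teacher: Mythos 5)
Your proposal is, after a single Abel-summation step, algebraically the \emph{same} renewal-equation argument as the paper's: writing out your residual
\[
\sum_{j=1}^{N}\mathbb{P}_x(\tilde\tau=j)\,S_{N-j}(\varphi)-\sum_{j=1}^{N+1}\mathbb{P}_x(\tau=j)\,S_{N+1-j}(\varphi)
\]
and using $\mathbb{P}_x(\tilde\tau>j)=\int_S P(x,dy)\,\mathbb{P}_y(\tau>j)$, one finds it collapses to $c_{N+1}(x)-\mathbb{E}_x c_N(X_1)-\mathbb{E}_\varphi f_c(X_{N+1})$ where $c_n(x)=\sum_{j=0}^{n}\mathbb{P}_x(\tau>j)\,\mathbb{E}_\varphi f_c(X_{n-j})$ --- exactly the error terms the paper controls via \eqref{2.17}. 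Your decomposition $g^*=\sum_n d_n$ with $d_n(x)=\mathbb{E}_x f_c(X_n)\mathbb{I}(\tau>n)$ plays the role of the paper's $\beta_n$, and the Fubini step and the ``fresh-Bernoulli'' $\tilde\tau$ (which correctly encodes that $\mathbb{P}_x(\tau>j)\neq\int_S P(x,dy)\mathbb{P}_y(\tau>j-1)$) are both sound.

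There are two genuine gaps. First, you leave the ``delicate step'' --- driving the residual to zero --- unproven, and your proposed ingredients point in a suboptimal direction. You do \emph{not} need boundedness of the partial sums $S_k(\varphi)$, nor the centering from Proposition~\ref{proposition2}; once the residual is rewritten as above, it vanishes (in the aperiodic case) directly from $\mathbb{E}_\varphi f_c(X_n)\to 0$ together with the Bounded Convergence Theorem applied against the summable tails $\mathbb{P}_x(\tau>j)$ and the uniform bound on $\mathbb{E}_\varphi f(X_k)$ from Proposition~\ref{proposition3}. Trying instead to control $S_k(\varphi)$ itself leads you into delicate second-moment territory ($\sum_k k\,|d_k(\varphi)|<\infty$), which is not available under Assumption~\ref{assume1}. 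Second, and more seriously, you do not address the periodic case at all. When $X$ has period $p\geq 2$, $\mathbb{E}_\varphi f_c(X_n)$ does not tend to zero (it oscillates among $p$ limits $\pi_{s(i)}f_c$), so the residual does not vanish along the full sequence; the paper handles this by passing to the subsequence $k_n=m+\ell+np-1$ for $x\in D_{p-\ell}$, tracking the limits $\pi_j f_c$, and using $\sum_{i=0}^{p-1}\pi_i f_c=0$ to cancel the residual. This constitutes roughly half the paper's proof and cannot be skipped, since Assumption~\ref{assume1} does not impose aperiodicity.
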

\begin{proof}
For \( n \geq 0 \), put
\[
\kappa_n(x) \triangleq \sum_{j=0}^{n} \mathbb{E}_x f_c(X_j)
\]
and note that Proposition \ref{proposition3} implies that \( \kappa_n(x) \) is finite-valued for each \( x \in S \). Then,
\begin{equation}\label{2.15}
\kappa_n(x) = f_c(x) + (P \kappa_{n-1})(x)
\end{equation}
for \( x \in S \). Furthermore,
\begin{align}
\nonumber \kappa_n(x) &= \mathbb{E}_x \sum_{j=0}^{(\tau-1)\wedge n} f_c(X_j) + \mathbb{E}_x \sum_{j=\tau}^{n} f_c(X_j) \mathbb{I}(\tau \leq n) \\
\nonumber &= \mathbb{E}_x \sum_{j=0}^{(\tau-1)\wedge n} f_c(X_j)+ \sum_{j=0}^{n} \mathbb{P}_x(\tau=j)\cdot \mathbb{E}_\varphi \sum_{k=0}^{n-j} f_c(X_k)\\
\nonumber &= \beta_n(x) + \sum_{k=0}^{n} \mathbb{E}_\varphi f_c(X_k) \mathbb{P}_x(\tau \leq n-k)\\
\nonumber &= \beta_n(x) + \sum_{k=0}^{n} \mathbb{E}_\varphi f_c(X_k) (1-\mathbb{P}_x(\tau>n-k)) \\
\label{2.16} &= \beta_n(x) + \sum_{k=0}^{n} \mathbb{E}_\varphi f_c(X_k)  - c_n(x),
\end{align}
where
\begin{align*}
\beta_n(x) &\triangleq \mathbb{E}_x \sum_{j=0}^{(\tau-1)\wedge n} f_c(X_j) ,\\
c_n(x) &\triangleq \sum_{j=0}^{n} \mathbb{P}_x(\tau > j) \mathbb{E}_\varphi f_c(X_{n-j})
\end{align*}
for $n\geq 0$.
Finally, plugging the identity \eqref{2.16} for \( \kappa_n(x) \) and \( \kappa_{n-1}(x) \) into \eqref{2.15}, we get
\begin{equation}\label{2.17}
\beta_n(x) = f_c(x) + \mathbb{E}_x \beta_{n-1}(X_1) - \mathbb{E}_\varphi f_c(X_{n}) + c_n(x) - \mathbb{E}_x c_{n-1}(X_1).
\end{equation}
A complication in using the above identity is that the equality $\mathbb{P}_x(\tau>j)=\int_S P(x,dy)\mathbb{P}_y(\tau>j-1)$ need not hold (because $\tau=m$ holds with probability $\lambda$ when $x\in C$, but $\tau=m-1$ is impossible). As a result, we apply the following argument instead.

Since \( \sum_{k=0}^{\tau-1} |f_c(X_k)| \) is \( \mathbb{P}_x \)-integrable (by Proposition \ref{proposition1}), the Dominated Convergence Theorem implies that
\begin{equation}\label{2.18}
\beta_n(x) \to g^*(x),
\end{equation}
as \( n \to \infty \), for each \( x \in S \). Furthermore, because Assumption \ref{assume1} asserts that \( (P v_i)(x) \leq v_i(x) + b_i \) for $i=1,2$, evidently
\[
\int_S P(x,dy) \left( \mathbb{E}_y \sum_{j=0}^{\tau-1} f(X_j) + \mathbb{E}_y \tau \right) < \infty
\]
for each \( x \in S \), and hence the Dominated Convergence Theorem proves that
\begin{equation}\label{2.19}
\mathbb{E}_x \beta_{n-1}(X_1) \to \mathbb{E}_x g^*(X_1)
\end{equation}
as \( n \to \infty \).

We now (temporarily) assume that \( X \) is aperiodic. Then,
\begin{equation}\label{2.20}
\mathbb{E}_\varphi f_c(X_n) \to 0
\end{equation}
as \( n \to \infty \), and for \( z \in S \),
\[
\sum_{k=0}^{\infty} \mathbb{P}_z(\tau > k) = \mathbb{E}_z \tau \leq v_2(z) + \frac{b_2 m}{\lambda}<\infty,
\]
as a result of Proposition \ref{proposition1}. In view of \eqref{2.20}, the Bounded Convergence Theorem therefore implies that \( c_n(z) \to 0 \) as \( n \to \infty \) for all \( z \in S \). Consequently, both \( c_n(x) \) and \( c_{n-1}(X_1) \) tend to 0 as \( n \to \infty \).

Assumption \ref{assume1}(ii) shows that \( v_2(X_1) \) is \( \mathbb{P}_x \)-integrable, so the Dominated Convergence Theorem therefore shows that
\[
\mathbb{E}_x c_{n-1}(X_1) \to 0
\]
as \( n \to \infty \). By sending \( n \to \infty \) in \eqref{2.17}, we therefore conclude that \( g^* \) satisfies Poisson's equation under Assumption \ref{assume1} and the assumption of aperiodicity.

For the periodic case, we use Theorem 5.4.4 of \cite{meyn09} to establish the existence of an absorbing set \( D \) that can be partitioned into \( p \) disjoint periodic subsets \( D_0, D_1, \dots, D_{p-1} \) such that \( P(x, D_{i+1}) = 1 \) for \( x \in D_i \) (\( 0 \leq i < p-1 \)), with \( P(x, D_0) = 1 \) for \( x \in D_{p-1} \).
Without loss of generality, we assume that $C\subseteq D_0$, so that $\varphi$ is fully supported on $D_r$, where $r = m \mod p$. Because $\tau$ can not occur until $m$ time units after $C$ is hit, it follows that $\mathbb{P}_{x}(\tau < \ell +m) = 0$ for $x\in D_{p-\ell}$, $0<\ell \leq p$. Also, the periodicity implies that
\[
\mathbb{P}_x(\tau> m+ \ell +j p)=\mathbb{P}_x (\tau > m+\ell +jp+i)
\]
for $0\leq i \leq p-1$, $j\geq 0$, and $x\in D_{p-\ell}$, $0<\ell \leq p$.
For \( x \in D_{p-\ell}\), let \( k_n = m+ \ell + n p - 1\), so that
\begin{align}
\nonumber c_{k_n}(x) &=
\sum_{j=0}^{m  + \ell -1} \mathbb{P}_x (\tau > j) \mathbb{E}_{\varphi} f_c (X_{k_n - j}) + \sum_{j = m  + \ell}^{k_n} \mathbb{P}_x (\tau > j) \mathbb{E}_{\varphi} f_c (X_{k_n - j})\\
\nonumber &= \sum_{j=0}^{m  + \ell -1} \mathbb{E}_{\varphi} f_c (X_{k_n - j})
+ \sum_{j=0}^{n-1} \sum_{i=0}^{p-1} \mathbb{P}_x (\tau > m + \ell + j p + i) \mathbb{E}_{\varphi} f_c (X_{(n - j) p - i-1})\\
\label{2.23new}&= \sum_{j=0}^{m  + \ell -1} \mathbb{E}_{\varphi} f_c (X_{k_n - j})
+ \sum_{j=0}^{n-1}  \mathbb{P}_x (\tau > m + \ell + j p ) \sum_{i=0}^{p-1} \mathbb{E}_{\varphi} f_c (X_{(n - j) p - i-1}).
\end{align}
Because of the periodicity,
\[
\mathbb{P}_{\varphi} (X_{n p + i} \in \cdot) \to \pi_{s(i)}(\cdot)
\]
as \( n \to \infty \), where \( s(i) \triangleq (r+i) \mod p \) and
$\pi_j (\cdot) \triangleq p \pi(\cdot \cap D_j)$, for $0 \leq j < p$, is the distribution of \( \pi \) conditioned on \( D_j \).
Furthermore,
\[
\varphi(\cdot) \leq \mathbb{E}_{\varphi}\tau \cdot \pi(\cdot),
\]
so
\begin{align*}
\mathbb{E}_{\varphi} | f_c (X_n)| \mathbb{I} (f(X_n) > w) &\leq \mathbb{E}_{\varphi} \tau \cdot \mathbb{E}_{\pi} |f_c (X_n)| \mathbb{I} (f(X_n) > w)\\
&= \mathbb{E}_{\varphi} \tau \cdot \mathbb{E}_{\pi} |f_c (X_0)| \mathbb{I} (f(X_0) > w),
\end{align*}
and hence \( ( f_c (X_n): n\geq 0 )\) is uniformly integrable under \( \mathbb{P}_{\varphi} \). So,
\begin{equation}\label{2.24new}
\mathbb{E}_{\varphi} f_c (X_{np + i}) \to \pi_{s(i)} f_c
\end{equation}
as \( n \to \infty \).
Observe that
\[
\sum_{j=0}^{\infty} \mathbb{P}_x (\tau > m +\ell + j p) \leq \mathbb{E}_x \tau < \infty,
\]
due to \eqref{2.5}, so the Bounded Convergence Theorem applied to \eqref{2.23new} yields the conclusion
\begin{align*}
c_{k_n} (x) \to \sum_{j=0}^{m + \ell -1} \pi_{s(m +\ell - j -1)} f_c
+ \sum_{j=0}^{\infty} \mathbb{P}_x (\tau > m+\ell + j p) \sum_{i=0}^{p-1} \pi_{s(p - i-1)} f_c
\end{align*}
as \( n \to \infty \).
But
\[
\sum_{i=0}^{p-1} \pi_i f_c = p \pi f_c =0,
\]
so
\begin{equation}\label{2.25new}
c_{k_n} (x) \to \sum_{j=0}^{m + \ell -1} \pi_{s(m +\ell - j -1)} f_c
\end{equation}
as \( n \to \infty \) for \( x \in D_{p-\ell} \). Similarly, for \( x \in D_{p-\ell + 1} \) (which we interpret as \( D_0 \) when \( \ell = 1 \)),
\[
c_{k_n -1} (x) \to \sum_{j=0}^{m + \ell -2} \pi_{s(m +\ell - j -2)} f_c
\]
as \( n \to \infty \). The same Dominated Convergence Theorem argument as used to justify \eqref{2.19} then proves that
\begin{equation}\label{2.26new}
\mathbb{E}_x c_{k_n -1} (X_1) \to \sum_{j=0}^{m + \ell -2} \pi_{s(m +\ell - j -2)} f_c \quad
\end{equation}
as \( n \to \infty \). But \eqref{2.25new} and \eqref{2.26new} imply that
\begin{equation}\label{2.27new}
c_{k_n} (x) - \mathbb{E}_x c_{k_n -1} (X_1) \to \pi_{s(m + \ell -1)} f_c
\end{equation}
as \( n \to \infty \). On the other hand, \eqref{2.24new} shows that
\begin{equation}\label{2.28new}
\mathbb{E}_{\varphi} f_c (X_{k_n}) \to \pi_{s(m+\ell -1)} f_c
\end{equation}
as \( n \to \infty \). Sending \( n \to \infty \) through the subsequence \( (k_n: n\geq 1) \) in \eqref{2.17} and utilizing \eqref{2.27new} and \eqref{2.28new} then proves that \( g^* \) solves Poisson's equation on \( D \).

Finally, for $x$ outside the absorbing set $D$, we note that $C$ must be contained within $D$ and \eqref{2.10} holds, so that Poisson's equation holds there also.

\end{proof}

With Proposition \ref{proposition1} and Theorem \ref{theorem1} in hand, Theorem \ref{theorem2} easily follows:

\begin{theorem}\label{theorem2}
Under Assumption \ref{assume1}, \( g^* \) is a solution of Poisson's equation \eqref{1.1}, and
\[
-b_1 \left( v_2(x) +\frac{ b_2 m}{\lambda} \right) \leq g^*(x) \leq v_1(x) + \frac{b_1 m}{\lambda},
\]
and
\[
|g^*(x)| \leq \max \left\{ v_1(x) + \frac{b_1 m}{\lambda}, b_1 \left( v_2(x) + \frac{b_2 m}{\lambda} \right) \right\}
\]
for \( x \in S \).
\end{theorem}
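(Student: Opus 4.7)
The plan is to note that Theorem~\ref{theorem1} has already established that $g^*$ as defined in \eqref{2.8} solves Poisson's equation, so only the two-sided bounds on $g^*(x)$ and the resulting absolute value bound remain to be proved. Both bounds will fall out of Proposition~\ref{proposition1} by splitting $f_c = f - \pi f$ inside the expectation defining $g^*$ and estimating each part separately.

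For the upper bound, I would write
\[
g^*(x) = \mathbb{E}_x \sum_{j=0}^{\tau-1} f(X_j) - (\pi f)\,\mathbb{E}_x \tau
\]
and, since $f \geq 0$ and $\pi f \geq 0$, simply drop the (nonnegative) second term and apply \eqref{2.4} to get $g^*(x) \leq v_1(x) + b_1 m/\lambda$. For the lower bound, I would discard the nonnegative term $\mathbb{E}_x \sum_{j=0}^{\tau-1} f(X_j)$ and use the bound $\pi f \leq b_1$ (noted in the excerpt, citing \cite{glynn08}, and which itself follows from integrating Assumption~\ref{assume1}(i) against $\pi$) together with \eqref{2.5} to obtain
\[
g^*(x) \geq -(\pi f)\,\mathbb{E}_x \tau \geq -b_1\!\left(v_2(x) + \frac{b_2 m}{\lambda}\right).
\]

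Finally, the bound on $|g^*(x)|$ follows immediately by taking the maximum of the magnitudes of the two one-sided bounds just established.

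Since Theorem~\ref{theorem1} supplies the hard part (that $g^*$ truly solves Poisson's equation, including the delicate periodic case), and Proposition~\ref{proposition1} supplies the quantitative estimates, there is no real obstacle in this proof; the only point that needs to be flagged carefully is the inequality $\pi f \leq b_1$, which justifies the lower bound and which is not one of the conclusions of Proposition~\ref{proposition1} but is recorded in the paragraph preceding \eqref{2.8}.
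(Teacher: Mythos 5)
Your proposal is correct and follows essentially the same route as the paper: decompose $g^*(x) = \mathbb{E}_x \sum_{j=0}^{\tau-1} f(X_j) - (\pi f)\,\mathbb{E}_x \tau$, use nonnegativity of $f$ to drop each term in turn, and invoke Proposition~\ref{proposition1} together with $\pi f \leq b_1$. Nothing further is needed.
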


\begin{proof}
The non-negativity of $f$ guarantees that
\begin{equation*}
-\pi f\cdot \mathbb{E}_x \tau \leq g^*(x) \leq  \mathbb{E}_x \sum_{j=0}^{\tau-1}f(X_j).
\end{equation*}
The assertion follows immediately by Proposition \ref{proposition1} and the fact that $\pi f\leq b_1$.
\end{proof}

\begin{remark}\label{remark-Herve}
As mentioned in the Introduction, \cite{ledoux23-pe} deal with \( m > 1 \) through
consideration of the Poisson's equation for \( P^m \), namely
$(P^m - I) g = -f_c$, and then apply their results for \( m = 1 \) to this equation.
As such, they assume our Assumption \ref{assume1} with \( P^m \) replacing \( P \).
It typically is much easier to verify Lyapunov inequalities like Assumption \ref{assume1} (i) and (ii) when they involve \( P \) rather than \( P^m \) (because \( P \) is known explicitly, and \( P^m \) is typically not known in closed form), so verifying their analog to Assumption \ref{assume1} will typically be significantly more difficult.

Furthermore, the \( m \)-step chain may never hit \( C \) from \( x \) when the chain is periodic,
so \cite{ledoux23-pe} need to assume that \( X \) is an aperiodic Markov chain.
(This condition is implicit in their assumption that $P^m$ and $P$ have the same stationary distributions.)
In addition, as noted in their Remark 2.1, their bound degenerates geometrically in the parameter \( m \)
(so that, in their words, the bound is ``essentially theoretical''), because their argument is not optimized for \( m > 1 \).
\end{remark}

We note that an immediate consequence of Theorem \ref{theorem2} is that
\begin{equation}\label{2.27}
g^*(X_n) + \sum_{i=0}^{n-1} f_c(X_i), \quad n\geq 0,
\end{equation}
is a \( \mathbb{P}_x \)-martingale for each \( x \in S \). Given that \( g^* \) satisfies Poisson's equation and the \( f_c(X_i) \)'s are integrable (as a consequence of Proposition \ref{proposition3}), only the \( \mathbb{P}_x \)-integrability of \( g^*(X_n) \) needs to be verified.
Given Theorem \ref{theorem2}, this follows from the \( \mathbb{P}_x \)-integrability of \( v_i(X_n) \) for \( i = 1, 2 \). But this is a consequence of \eqref{2.7a}.

\section{A Potential Kernel Representation for the Solution of Poisson's Equation}\label{sec3}

As noted in Section \ref{sec2}, \eqref{2.27} is a martingale in the presence of Assumption \ref{assume1}. Hence,
\[
g^*(x) - \mathbb{E}_x g^*(X_n) = \sum_{i=0}^{n-1} \mathbb{E}_x f_c(X_i)
\]
for \( n \geq 0 \). If
\[
\lim_{n \to \infty} \sum_{i=0}^{n-1} \mathbb{E}_x f_c(X_i)
\]
exists and is finite-valued, then \( \mathbb{E}_x g^*(X_n) \) also converges to a finite-valued limit and
\begin{equation}\label{3.1}
g^*(x) = \lim_{n \to \infty} \sum_{i=0}^{n-1} \mathbb{E}_x f_c(X_i) + c,
\end{equation}
for some constant \( c \in \mathbb{R} \). Hence, \( g^* \) is (up to an additive constant) given by the infinite sum ``potential'' on the right-hand side of \eqref{3.1}.
When \( X \) is aperiodic, we expect that \( \mathbb{E}_x g^*(X_n) \) converges to a finite limit when \( \pi |g^*|<\infty \); the need for this extra moment condition to ensure \eqref{3.1} is discussed in greater detail in the countable state space setting in \cite{glynn2024solution}.

\begin{assumption}\label{assume2}
In addition to Assumption \ref{assume1}, we assume there exist non-negative functions \( v_3, v_4 : S \to \mathbb{R}_+ \) and constants \( b_3, b_4 \) such that:
\begin{equation}\label{drift_v3}
(P v_3)(x) \leq v_3(x) - v_1(x) + b_3 \mathbb{I}_C(x),
\end{equation}
\[
(P v_4)(x) \leq v_4(x) - v_2(x) + b_4 \mathbb{I}_C(x).
\]
\end{assumption}

\begin{theorem}\label{theorem3}
Suppose that Assumption \ref{assume2} holds. If \( X \) has period \( p \geq 1 \), then for each \( x \in S \),
\begin{equation}\label{3.2}
\tilde{g}(x) = \lim_{n \to \infty} \sum_{i=0}^{np-1} \mathbb{E}_x f_c(X_i)
\end{equation}
exists and
\begin{equation}\label{3.3}
-p b_3 - \frac{b_1 m}{\lambda} \leq \tilde{g}(x) - g^*(x) \leq b_1\left(p b_4 + \frac{b_2 m}{\lambda}\right)
\end{equation}
and
\begin{equation}\label{3.4}
|\tilde{g}(x) - g^*(x)| \leq \max \left\{ b_1\left(p b_4 + \frac{b_2 m}{\lambda}\right), p b_3 + \frac{b_1 m}{\lambda} \right\}.
\end{equation}
\end{theorem}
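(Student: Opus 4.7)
The plan is to build on the martingale identity behind \eqref{2.27}, which is available under Assumption \ref{assume1} and hence under Assumption \ref{assume2}. Taking $\mathbb{P}_x$-expectations in $g^*(X_{np}) + \sum_{i=0}^{np-1} f_c(X_i)$ (which equals the constant $g^*(x)$) gives
\[
\sum_{i=0}^{np-1}\mathbb{E}_x f_c(X_i) = g^*(x) - \mathbb{E}_x g^*(X_{np}).
\]
Hence existence of the limit in \eqref{3.2} is equivalent to convergence of $\mathbb{E}_x g^*(X_{np})$, and whenever the limit exists,
\[
\tilde{g}(x) - g^*(x) = -\lim_{n\to\infty} \mathbb{E}_x g^*(X_{np}).
\]
The proof then splits naturally into (a) establishing this limit and (b) bounding it.

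For (a), I would first invoke the cyclic decomposition $D = D_0 \cup \dots \cup D_{p-1}$ of the maximal Harris set (with $C \subseteq D_0$), exactly as in the proof of Theorem \ref{theorem1}. For $x \in D_j$, the subsampled chain $(X_{np})_{n\geq 0}$ is aperiodic positive Harris on $D_j$ with stationary distribution $\pi_j \triangleq p\,\pi(\,\cdot\,\cap D_j)$. Apply Proposition \ref{proposition3} twice: once with the roles $(v_1,f,b_1)$ replaced by $(v_3,v_1,b_3)$ to get $\sup_n \mathbb{E}_x v_1(X_n) < \infty$, and once with those roles replaced by $(v_4,v_2,b_4)$ to get $\sup_n \mathbb{E}_x v_2(X_n) < \infty$. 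These uniform bounds, together with the sandwich
\[
-b_1 v_2(\cdot) - \frac{b_1 b_2 m}{\lambda} \leq g^*(\cdot) \leq v_1(\cdot) + \frac{b_1 m}{\lambda}
\]
from Theorem \ref{theorem2} and the standard $f$-norm ergodicity of aperiodic positive Harris chains (e.g.\ Theorem 14.0.1 of \cite{meyn09}), yield $\mathbb{E}_x g^*(X_{np}) \to \pi_j g^*$. Starting states $x$ outside $D$ are dispatched by composing with the (a.s.\ finite) first entry time into $D$, as at the end of the proof of Theorem \ref{theorem1}.

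For (b), integrate the drift inequalities of Assumption \ref{assume2} against $\pi$ and use $\pi P = \pi$ to get $\pi v_1 \leq b_3$ and $\pi v_2 \leq b_4$. Since $\pi = p^{-1}\sum_{j=0}^{p-1}\pi_j$, we obtain $\pi_j v_1 \leq p b_3$ and $\pi_j v_2 \leq p b_4$ for each $j$. Integrating Theorem \ref{theorem2}'s sandwich with respect to $\pi_j$ then gives
\[
-b_1\left(p b_4 + \frac{b_2 m}{\lambda}\right) \leq \pi_j g^* \leq p b_3 + \frac{b_1 m}{\lambda},
\]
and substituting into $\tilde{g}(x) - g^*(x) = -\pi_{s(x)} g^*$ (with $s(x)$ the index of the cyclic class of $x$) yields \eqref{3.3}. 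Estimate \eqref{3.4} follows by taking the maximum of the absolute values of these two bounds.

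I expect the main technical obstacle to be the uniform-integrability step in (a): the first-moment bound from Proposition \ref{proposition3} does not by itself imply UI of $(v_i(X_{np}))_n$. The drift structure in Assumption \ref{assume2} does, however, provide $v_i$-regularity in the Meyn--Tweedie sense, so the upgrade from bounded first moments to convergence of means is standard but not entirely trivial. A smaller point is the periodic bookkeeping when $x$ lies outside the principal cyclic class (or outside $D$ altogether), which is handled exactly as in the proof of Theorem \ref{theorem1}.
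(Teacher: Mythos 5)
Your reduction to the convergence of $\mathbb{E}_x g^*(X_{np})$, and your treatment of part (b) (integrating the drift inequalities against $\pi$ to get $\pi v_1\le b_3$, $\pi v_2\le b_4$, then against $\pi_j$ via $\pi_j\le p\,\pi$ on $D_j$, then substituting into the sandwich from Theorem \ref{theorem2}) are both exactly what the paper does. The divergence is in part (a), and that is where your proposal has a real gap.

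You propose to get $\mathbb{E}_x g^*(X_{np})\to\pi_j g^*$ from Proposition \ref{proposition3} (uniform first moments of $v_1,v_2$) plus ``$f$-norm ergodicity'' for the subsampled chain. You correctly flag that bounded first moments do not imply uniform integrability, but you then leave precisely this point unresolved. Invoking Theorem 14.0.1 of \cite{meyn09} does not close the gap by citation alone: that theorem is stated for aperiodic chains, so one would need to apply it to the $p$-step chain on each cyclic class, which in turn requires a drift condition (V3) and a small-set condition \emph{for $P^p$}, neither of which you derive from Assumption \ref{assume2} (which is a one-step drift), nor does the resulting small set obviously reduce to $C$. So the step ``standard but not entirely trivial'' is in fact the entire content of part (a), and it is left open. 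The paper avoids the issue entirely with a self-contained renewal-equation argument: it decomposes $\mathbb{E}_x g^*(X_{np})$ over $\{\tau > np\}$ and $\{\tau=j\}$, uses Proposition \ref{proposition1} with $(v_3,v_1,b_3)$ and $(v_4,v_2,b_4)$ to conclude that $\mathbb{E}_x\sum_{j=0}^{\tau-1}v_i(X_j)<\infty$ (so the $\{\tau>np\}$ term vanishes and the sequence $b^*_n=\mathbb{E}_\varphi g^*(X_n)\mathbb{I}(\tau>n)$ is absolutely summable), writes $a_n^*=\mathbb{E}_\varphi g^*(X_n)$ as the solution of the renewal equation $a^*_n=b^*_n+\sum_j a^*_{n-j}p_j$, and applies the discrete renewal theorem along the arithmetic progression $kp-\ell-m$. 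That route needs nothing beyond Assumptions \ref{assume1}--\ref{assume2} for $P$ itself. If you want to keep your structure, you either need to carry out the $P^p$-drift construction carefully, or replace the $f$-norm-ergodicity citation with the renewal argument.
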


\begin{proof}
Assumption \ref{assume2} ensures that \( \pi v_1 \leq b_3 \) and \( \pi v_2 \leq b_4 \); see \cite{glynn08}. Fix \( x \in D_{p-\ell} \), for $1 \leq \ell \leq p$, and let $\bar{n}=\max\{i\geq 0: \ell+m+ip\leq np\}$. Then,
\begin{align}
\nonumber \mathbb{E}_x g^*(X_{np}) = \mathbb{E}_x g^*(X_{np}) &\mathbb{I}(\tau > np) \\
\label{3.5} &+ \sum_{i=0}^{\bar{n}} \mathbb{P}_x(\tau = \ell+m+ip) \mathbb{E}_\varphi  g^*(X_{(n-i)p-\ell-m})
\end{align}
for \( n\geq 1 \). Letting \( v_i \) play the role of \( f \) in Proposition \ref{proposition1}, we find that for $i=1,2$,
\begin{equation}\label{3.6}
\mathbb{E}_x  \sum_{j=0}^{\tau-1} v_i(X_j)  \leq v_{i+2}(x) + \frac{b_{i+2} m}{\lambda},
\end{equation}
so that
\[
\mathbb{E}_x v_i(X_n) \mathbb{I}(\tau > n)\leq \mathbb{E}_x  \sum_{j=0}^{\tau-1} v_i(X_j)\mathbb{I}(\tau > n) \to 0
\]
as $n \to \infty$.
Theorem \ref{theorem2} bounds \( g^* \) in terms of \( v_1 \) and \( v_2 \), so that
\begin{equation}\label{3.7}
\mathbb{E}_x g^*(X_{np}) \mathbb{I}(\tau > np) \to 0
\end{equation}
as $n \to \infty$.

Put \( a_n^* = \mathbb{E}_\varphi g^*(X_n) \) and \( b_n^* = \mathbb{E}_\varphi g^*(X_n) \mathbb{I}(\tau > n) \). Because of \eqref{3.6}, \( (b_n^*:n \geq 0) \) is absolutely summable. Also,
\[
a_n^* = b_n^* + \sum_{j=1}^{n} a_{n-j}^* p_j,
\]
so that
\[
a_n^* = \sum_{j=0}^{n} b_{n-j}^* u_{j}
\]
for \( n \geq 0 \). Applying the renewal theorem for \( (p_j: j \geq 0) \) concentrated on some multiple of \( p \), we find that \( a_{k p-\ell-m}^* \) converges to a limit as \( k \to \infty \). The Bounded Convergence Theorem and \eqref{3.7} therefore ensure that \( \mathbb{E}_x g^*(X_{np}) \) converges to a finite limit. In view of \eqref{2.27}, we find that the limit in \eqref{3.2} exists. Furthermore, for $x\in D_i$,
\[
\mathbb{E}_x g^*(X_{np}) \to \pi_{i} g^*,
\]
as \( n \to \infty \). In view of Theorem \ref{theorem2},
\begin{align}
\nonumber \pi_i g^* &\leq \pi_i v_1 +  \frac{b_1 m}{\lambda}\pi_i e\\
\label{3.8} &\leq p\pi v_1 +\frac{b_1 m}{\lambda} \leq p b_3 + \frac{b_1 m}{\lambda}
\end{align}
and
\begin{equation}\label{3.9}
\pi_i g^* \geq -b_1 \left( \pi_i v_2 + \frac{b_2 m}{\lambda} \right) \geq -b_1 \left( p b_4 + \frac{b_2 m}{\lambda} \right).
\end{equation}
Since
\[
\tilde{g}(x) - g^*(x) = - \pi_{i} g^*,
\]
Theorem \ref{theorem2}, \eqref{3.8}, and \eqref{3.9} yield the bounds \eqref{3.3} and \eqref{3.4}.
\end{proof}

\section{An Illustrative Example}\label{sec-example}

In this section, we provide an example to allow the comparison of our bound in Theorem \ref{theorem2} to that provided by Theorem 2.3 of \cite{ledoux23-pe}. In particular, we consider the delay sequence (or waiting time sequence) \((W_n : n \geq 0)\) associated with
the single-server GI/G/1 queue; see p.267 of \cite{asmussen2003applied} for details.
This Markov chain satisfies the stochastic recursion
\[
W_{n+1} = [W_n + Z_{n}]^+,
\]
where \([x]^+ = \max\{x, 0\}\), and \((Z_n : n \geq 0)\) is an i.i.d. sequence independent of \(W_0\). We assume that \(Z_0\) has a continuous positive density \(h_Z(\cdot)\) with \(\mathbb{E} Z_0 < 0\) and \(\mathbb{E} Z_0^2 < \infty\). Under these conditions, \((W_n : n \geq 0)\) has a unique stationary distribution \(\pi\) with \(\pi f < \infty\), where \(f(x) = x\) for \(x \geq 0\); see Theorem 2.1 on p.270 of \cite{asmussen2003applied}.

The transition kernel $P$ of this Markov chains can be expressed as
\[
P(x,dy)= \mathbb{I}_{0}(dy)\int_{-\infty}^{-x}h_Z(w)dw + \mathbb{I}_{(0,\infty]}(y)h_Z(y-x)dy,
\]
for $x,y \geq 0$.
We put $v_1(x) = c_1 x^2 \vee 1$, $v_2(x) = v_1(x)$, where $c_1>0$ and $a\vee b \triangleq \max\{a,b\}$.
Note that for $i=1,2$,
\[
(Pv_i)(x) - v_i(x) \sim 2c_1 x \mathbb{E} Z_0
\]
as \(x \to \infty\), where \(\sim\) means that the ratio of the left-hand side to the right-hand side converges to 1. If we put \(c_1 = \frac{\kappa}{2|\mathbb{E} Z_0|} \) with $\kappa>1$, then there exists an interval \(C = [0, x_0]\) for which Assumption \ref{assume1} holds, where \(\varphi(dy) = \frac{\phi(dy)}{\lambda} \), \(m = 1\),
\begin{align*}
\phi(dy) &= \mathbb{I}_{0}(dy) \int_{-\infty}^{-x_0}h_Z(w)dw + \inf_{0 \leq x \leq x_0} h_Z(y - x)dy,\\
\lambda &= \int_{0}^{\infty} \phi(dw),\\
b_1 & = b_2 = \sup_{0 \leq x \leq x_0} ((P v_1)(x) - v_1(x) +f(x) \vee 1).
\end{align*}
Our bound on the solution $g^*$ (see Theorem \ref{theorem2}), valid under the assumption \(\mathbb{E} Z_0^2 < \infty\), is
\begin{equation}\label{4.1}
- b_1 \left( \frac{\kappa x^2}{2|\mathbb{E} Z_0 |} + \frac{b_1}{\lambda} \right) \leq g^*(x) \leq \frac{\kappa x^2}{2 |\mathbb{E} Z_0 |} + \frac{b_1}{\lambda}
\end{equation}
for \(x \) large.

On the other hand, the bound of \cite{ledoux23-pe} (see their discussion on p.11) requires the same assumption \(\mathbb{E} Z_0^2 < \infty\) and takes the form
\begin{equation}\label{4.2}
|g^*(x)| \leq a (1 + b_1)\cdot \frac{\kappa x^2}{ 2|\mathbb{E} Z_0 |}
\end{equation}
for \(x \) large, where
\[
a = 1 + \max\left\{0, \frac{b_1}{\lambda} - \varphi v_1\right\}.
\]
We observe that for this example in which $m=1$, our upper bound in \eqref{4.1} is asymptotic to \( \max\{1,b_1\}\frac{\kappa x^2}{2|\mathbb{E} Z_0 |}\) as \(x \to \infty\), whereas \eqref{4.2} is asymptotically at least as large as \((1+b_1)\frac{\kappa x^2}{2|\mathbb{E} Z_0 |}\), so our bound is tighter for large \(x\).

\normalem
\bibliographystyle{plainnat}
\bibliography{ref}

\end{document}